\documentclass[12pt,a4paper,psamsfonts]{amsart}
\usepackage[english]{babel}
\usepackage[T1]{fontenc}
\usepackage{mathtools}
\usepackage{fancyhdr}
\usepackage{appendix}
\usepackage{amssymb,amscd,amsxtra,calc}
\usepackage{mathrsfs}
\usepackage{cmmib57}
\usepackage{multirow}
\usepackage[all]{xy}
\usepackage{longtable}
\usepackage[colorlinks=true,linkcolor=blue,anchorcolor=blue,citecolor=blue,pagebackref,linktocpage]{hyperref}
\usepackage{cleveref}
\usepackage{tikz}
\usepackage{cite}
\theoremstyle{plain}
    \newtheorem{thm}{Theorem}[section]

    \newtheorem{lemma}[thm]{Lemma}
    \newtheorem{proposition}[thm]{Proposition}
    \newtheorem{question}[thm]{Question}
    \newtheorem{theorem}[thm]{Theorem}
    \newtheorem{problem}[thm]{Problem}

\theoremstyle{definition}
    \newtheorem{definition}[thm]{Definition}
     
    \newtheorem{assumption}[thm]{Assumption}
    
    \newtheorem*{notation*}{Notation and Terminology}
      
    \newtheorem{remark}[thm]{Remark}
    
\theoremstyle{remark}

\newcommand{\bP}{\mathbb{P}}
\newcommand{\bQ}{\mathbb{Q}}

\newcommand{\bC}{\mathbb{C}}

\newcommand{\bZ}{\mathbb{Z}}

\newcommand{\mstriangle}[1]{
\begin{tikzpicture}[x=0.3cm,y=0.3cm]
\draw (-0.4,-0.433) -- (1.4,-0.433);
\draw (-0.2,-0.7794) -- (0.7,0.7794);
\draw (1.2,-0.7794) -- (0.3,0.7794);
\end{tikzpicture}
}
\newcommand{\mssharp}[1]{
\begin{tikzpicture}[x=0.3cm,y=0.3cm]
\draw (-0.8,-0.5) -- (0.8,-0.5);
\draw (-0.8,0.5) -- (0.8,0.5);
\draw (-0.5,-0.8) -- (-0.5,0.8);
\draw (0.5,-0.8) -- (0.5,0.8);
\end{tikzpicture}
}

\makeatletter

\newcommand{\Rmnum}[1]{\expandafter\@slowromancap\romannumeral #1@}
\makeatother

\title[$(-1)$-curves with higher genus]{Examples of infinitely many $(-1)$-curves with higher genus on elliptic surfaces}

\author{Sichen Li}
\address{
School of Mathematics, East China University of Science and Technology, Shanghai 200237, P. R. China}
\email{\href{mailto:sichenli@ecust.edu.cn}{sichenli@ecust.edu.cn}}
\author{Jihao Liu}
\address{
Department of Mathematics, Peking University, No. 5 Yiheyuan Road, Haidian District, Beijing 100871, China
\endgraf 
Beijing International Center for Mathematical Research, Peking University, No. 5 Yiheyuan Road, Haidian District, Beijing 100871, China}
\email{\href{mailto:liujihao@math.pku.edu.cn}{liujihao@math.pku.edu.cn}}
\subjclass[2020]{14J27, 14C20, 11G05}
\keywords{Kodaira-N\'eron model,  elliptic surfaces, $K$-rational points, Mordell-Weil lattice, (-1)-curves of genus $g>1$, Nagell-Lutz theorem}

\begin{document}

\begin{abstract}
In this article, for every integer $g>1$, we construct the first example of an elliptic surface with infinitely many smooth \((-1)\)-curves of genus \(g\), settling an open question of Bauer et al. [Duke Math. J. \textbf{162} (10) (2013), 1877-1894].
\end{abstract}

\maketitle
\section{Introduction}
\subsection{Negative curves with higher genus}
It is classical that blowing up $\mathbb{P}^2$ at nine very general points yields a rational surface containing infinitely many genus-zero $(-1)$-curves. A parallel construction produces surfaces admitting infinitely many genus-one $(-m)$-curves for every integer $m \geq -1$: one blows up torsion points along a constant-torsion family of elliptic curves on the abelian surface $A = E \times E$, where $E$ is an elliptic curve without complex multiplication (see \cite[Theorem 4.1]{Bauer et al. 2013}). These examples motivate the following highly intriguing open question:

{\bf Question.}
For each integer $m \geq 1$ and each $g \geq 0$, does there exist a smooth projective surface carrying infinitely many $(-m)$-curves of genus $g$?

Bauer et al.\ \cite[Theorem 4.3]{Bauer et al. 2013} give an affirmative answer to the above question for all $m > 1$ and arbitrary $g \geq 0$. Their reasoning proceeds as follows: starting from an elliptic rational surface with infinite Mordell--Weil group, one carries out a degree-$m$ base change with a  curve of genus $g$ to obtain a surface with $\chi(\mathcal O_Y) = m$. The Mordell--Weil group remains infinite, which implies that for every $m \geq 1$ and $g \geq 0$, there exists an elliptic surface with infinitely many  $(-m)$-curves of genus $g$.

The case $m=1$ and $g>1$ remains open and unresolved for more than a decade. Precisely, Bauer et al.\ formulate this specialized open question:
\begin{question}
\label{MainQue}
\cite[Question 4.5]{Bauer et al. 2013}
For every integer $g > 1$, does there exist a smooth projective surface admitting infinitely many $(-1)$-curves of genus $g$?
\end{question}

\subsection{Obstructions to Explicit Construction}
To resolve Question \ref{MainQue}, it suffices to construct an elliptic surface $X$ over a smooth projective curve of genus $g$ satisfying $\chi(\mathcal O_X) = 1$ with infinite Mordell--Weil group (see also \cite[Proposition 2.5]{Li19}, \cite[Proposition 4.1]{Li26}).

The general strategy for producing infinitely many \((-1)\)-curves of genus $g$ from sections of Jacobian elliptic surfaces is well understood, yet reconciling all geometric constraints simultaneously remains challenging. The primary obstruction arises from the interplay between \(\chi(\mathcal O_X)\), the genus of the base curve, and the existence of a non-torsion section, which guarantees an infinite Mordell--Weil group. 
In general, imposing \(\chi(\mathcal O_X)=1\) restricts the base curve to low genus, while fibrations over higher-genus bases generically admit no non-torsion sections.

We avoid these conflicts by constructing a Jacobian elliptic fibration with compatible Weierstrass model, branching data and base line bundle. 
The point of the construction is to keep $\chi(\mathcal O_X)=1$  while raising the genus of the base, and to produce a section whose non-torsionness can be checked on a single fiber by the Lutz-Nagell theorem \cite{Nagell35, Lutz37}.
\subsection{Main Result}
In this paper, for all $g>1$, we construct the first elliptic surface admitting infinitely many smooth $(-1)$-curves of genus $g$, thereby fully resolving the decade-long open question proposed by Bauer et al.
\begin{theorem}
\label{MainThm}
For every integer $g > 1$, there exists an elliptic surface $X$ carrying infinitely many $(-1)$-curves of genus $g$.
\end{theorem}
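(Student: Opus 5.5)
Following the reduction recalled in the introduction (\cite[Proposition 2.5]{Li19}, \cite[Proposition 4.1]{Li26}), I will construct a Jacobian elliptic surface $f\colon X\to C$ over a smooth projective curve $C$ of genus $g$ with zero section $O$, $\chi(\mathcal O_X)=1$, and infinite Mordell--Weil group $E(K)$, $K=\bC(C)$. Every section $P$ is a smooth curve isomorphic to $C$, hence of genus $g$. Adjunction gives $P^2=-\chi(\mathcal O_X)=-1$, using $K_X\equiv f^*(K_C+L)$ with $\deg L=\chi(\mathcal O_X)$, so that $K_X\cdot P=2g-2+\chi$ and $P^2+K_X\cdot P=2g-2$. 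An infinite Mordell--Weil group therefore yields infinitely many pairwise distinct smooth $(-1)$-curves of genus $g$.

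For the construction I will take $C$ hyperelliptic, $\pi\colon C\to\bP^1$ of degree $2$ given by $v^2=h(t)$ with $\deg h=2g+2$, and let $L=\pi^*\mathcal O_{\bP^1}(1)$ minus a suitable correction. Since $\deg L$ must equal $1$, I cannot simply pull back. Instead I will choose a line bundle $L$ of degree $1$ on $C$, e.g.\ $L=\mathcal O_C(p)$ for a Weierstrass point $p$ (so $L^{2}=\pi^*\mathcal O(1)$). I will then pick Weierstrass coefficients $a_4\in H^0(L^4)=H^0(\pi^*\mathcal O(2))$ and $a_6\in H^0(L^6)=H^0(\pi^*\mathcal O(3))$, pulled back from $\bP^1$, with discriminant $4a_4^3+27a_6^2\in H^0(L^{12})$ not identically zero and with at worst rational double point singularities on the Weierstrass model. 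Its minimal resolution $X$ then has fundamental line bundle $L$, so $\chi(\mathcal O_X)=\deg L=1$. For the minimality to hold, I need $\operatorname{ord}_x(a_4)<4$ or $\operatorname{ord}_x(a_6)<6$ at every point, which is a generic condition.

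The main obstacle is producing a non-torsion section, because a section must be a pair $(x,y)$ with $x\in H^0(L^2)$ and $y\in H^0(L^3)$ satisfying $y^2=x^3+a_4x+a_6$, and these spaces are tiny. My first attempt is to choose $x\in H^0(\pi^*\mathcal O(1))$, say $x=t$, together with $y=v\cdot s$, where $v$ is the odd section and $y$ ranges over $H^0(L^3)=H^0(\mathcal O_C(3p))$. I would then reverse-engineer the coefficients: fix $x$ and $y$, and choose $a_4$ freely. The remaining coefficient $a_6:=y^2-x^3-a_4x$ is forced, and I must check that it lies in $H^0(L^6)$ and keeps the discriminant nonzero. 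Equivalently, I will view $X$ as a base change $C\to\bP^1$ of a rational (or $\chi=\tfrac12$-type) Weierstrass equation twisted by the hyperelliptic involution, so that $y$ being odd gives a section of the quadratic twist. This is how the degree drops from $2$ to $1$.

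Finally, to prove $P=(x,y)$ is non-torsion, I will specialize to a smooth fiber over a rational point $t_0$ where the curve $E_{t_0}$ is defined over $\bQ$ with integral coefficients. Since specialization is injective on torsion for good fibers, it suffices that $P_{t_0}$ has infinite order. By Lutz--Nagell, this holds if $P_{t_0}$ or some multiple $2P_{t_0}$ has non-integral coordinates, or if $y(P_{t_0})^2\nmid 4a^3+27b^2$. I will choose the numerical coefficients so that this check is a single explicit computation, which yields $\operatorname{rank} E(K)\ge1$ and completes the proof.
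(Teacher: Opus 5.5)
Your reduction (each section is a copy of $C$ with $P^2=-\chi(\mathcal O_X)=-1$), your choice $L=\mathcal O_C(p)$ for a Weierstrass point $p$ with coefficients pulled back from $\mathbb{P}^1$, and your closing specialization/Lutz--Nagell step are the paper's framework. The gap is in the step you call the main obstacle: the mechanism you propose for the section cannot work for $g\ge 2$. Put $p$ at $t=\infty$, i.e.\ write $C$ as $w^2=f(t)$ with $\deg f=2g+1$. Then $H^0(L^k)$ is the space of functions with a pole of order at most $k$ at $\infty$ and no other poles. Since $3$ is a Weierstrass gap at $p$, $H^0(L^3)$ is spanned by $1$ and $t$, and both are invariant under the hyperelliptic involution. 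By contrast, any nonzero odd function $w\,r(t)$ that is regular away from $\infty$ has $r$ a polynomial and a pole of order $2g+1+2\deg r\ge 5$ at $\infty$. (In your model $v^2=h(t)$, $v$ has poles of order $g+1$ at both points over $t=\infty$.) So no $y=v\cdot s$ with $s\neq 0$ lies in $H^0(L^3)$. The only integral points that are odd under the involution have $y=0$ and are $2$-torsion, and your forced $a_6=y^2-x^3-a_4x$ could never lie in $H^0(L^6)$.

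The drop from $\chi=2$ to $\chi=1$ is also not a quadratic-twist phenomenon. In the paper's example $y^2=x^3+(t^2+1)x+1$, homogenizing over $\mathbb{P}^1$ with $\mathcal O(1)$ gives a rational elliptic surface with six $I_1$ fibres and an $I_0^*$ fibre at $t=\infty$. The double cover branched at $\infty$ turns the $I_0^*$ fibre into a smooth one and doubles the $I_1$ fibres, so $e(X)=12$. The points you should use are therefore the \emph{even} ones, defined over $\bC(t)$. Your reverse-engineering step works verbatim once you drop the odd ansatz: $x=0$, $y=1$, $a_4=t^2+1$ forces $a_6=1$. This is precisely the paper's surface over $w^2=\prod_{i=1}^{2g+1}(t-\lambda_i)$ with $L=\mathcal O_B(\infty)$. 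The data is minimal since $a_6$ vanishes only at $\infty$, where $a_4$ does not, so $\chi(\mathcal O_X)=\deg L=1$. Over a point with $t=0$ the fibre is $y^2=x^3+x+1$, where $2(0,1)=(\tfrac14,-\tfrac98)$ is not integral, so $(0,1)$ is non-torsion by Lutz--Nagell. As written, your proposal never commits to coefficients, and its only concrete ansatz is infeasible. The existence statement, whose entire content is such an explicit example, is therefore not yet established.
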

\begin{remark}
All prior constructions of infinite families of $(-m)$-curves only achieve either $m>1$ or bounded low base genus; our work provides the first explicit construction valid for arbitrary $g>1$ under the critical constraint $\chi(\mathcal O_X)=1$.
\end{remark}
Beyond settling the open question raised by Bauer et al., our explicit fibration yields a new family of non-isotrivial elliptic surfaces admitting base curves of arbitrarily large genus and infinite Mordell--Weil groups. Note that our examples are Jacobian elliptic surfaces satisfying $\chi(\mathcal O_X)=\kappa(X)=1$. Motivated by these constructions, we conclude by posing the following problem.

\begin{problem}
Classify Jacobian elliptic surfaces $X$ with $\chi(\mathcal O_X)=\kappa(X)=1$.
\end{problem}
\section{Proof of Theorem~\ref{MainThm}}
\subsection*{\bf Notation and Terminology.}
Let $X$ be a smooth projective surface over $\bC$.
\begin{itemize}
\item By a curve on $X$, we mean a reduced and irreducible curve.
\item A negative curve on $X$ is a curve with negative self-intersection.
\item A $(-k)$-curve on $X$ is a curve $C$ with $C^2=-k<0$.
\item $\chi(\mathcal O_X)$ is the Euler characteristic of $X$.
\end{itemize}
\subsection{Basic results}
We collect preliminary definitions, standard facts and auxiliary criteria for elliptic fibrations and Mordell--Weil groups required for our construction. Most material below comes from the foundational monograph \cite{SS19}; we cite separate references for the Nagel--Lutz theorem.
 Throughout the paper, we adopt the following convention to exclude the trivial product case (cf. \cite[Example 5.6]{SS19}) via a standing assumption:
\begin{assumption}
Any elliptic surface $\pi: X\to B$ has a singular fibre.
Thus, we have that $\chi(\mathcal O_X)>0$ by \cite[Corollary 5.50]{SS19}.
\end{assumption}
 \begin{definition}
 Given an algebraic surface fibration $\pi: X\to B$, a \emph{section of $\pi$} is a morphism $\sigma: B \to X$ such that $\pi\circ\sigma$ is the identity map of $B$.
 An elliptic surface $\pi: X\to B$  is \emph{Jacobian} if $\pi$ has a section. 
 Let $\mathrm{MW}(\pi)$ be the Mordell-Weil group of $\pi: X\to B$, i.e., the group of sections, $O$ being the zero section.
 \end{definition}
 \begin{definition}
\cite[Theorem 5.19]{SS19}
Let $E$ be an elliptic curve over the function field $K$ of a smooth projective curve $C$.
Then there exists an elliptic surface $X$ over $C$ whose generic fiber is isomorphic to $E/K$.
 In this case, we say that  $S$ is the Kodaira-N\'eron model of $E/K$.
 We also call $X$ the elliptic surface associated with $E/K$.
 \end{definition}
A fundamental dictionary links global sections of this elliptic surface to $K$-rational points on the elliptic curve over the function field $K$ of $C$.
 \begin{proposition}
\cite[Proposition 5.4]{SS19}
\label{K-rational}
The global sections of $\pi: X\to B$ are in a natural one-to-one correspondence with the $K$-rational points of $E$.
 \end{proposition}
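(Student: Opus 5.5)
The plan is to build the correspondence in both directions by passing through the generic point $\eta=\operatorname{Spec} K$ of $B$. Then we check that the two maps are mutually inverse. Throughout, $\pi: X\to B$ is the Kodaira--N\'eron model of $E/K$: a proper (indeed projective) morphism from a smooth surface to a smooth projective curve, whose generic fibre $X_\eta$ is identified with $E$.

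\emph{From sections to points.} Given a section $\sigma: B\to X$, compose it with $\eta\to B$. This gives a morphism $\operatorname{Spec} K\to X$. Because $\pi\circ\sigma=\mathrm{id}_B$, this morphism lies over $\eta$, so it factors through the generic fibre $X_\eta=X\times_B \operatorname{Spec} K\cong E$. It therefore defines a point $P_\sigma\in E(K)$. This construction is clearly natural, and it sends the zero section $O$ to the origin of $E$.

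\emph{From points to sections.} Conversely, let $P\in E(K)$. Via $E\cong X_\eta\hookrightarrow X$ it gives a morphism $\operatorname{Spec} K\to X$ over $B$, that is, a rational section of $\pi$. Fix a closed point $b\in B$. The local ring $\mathcal O_{B,b}$ is a discrete valuation ring with fraction field $K$, because $B$ is a smooth curve. Since $\pi$ is proper, the valuative criterion of properness extends $\operatorname{Spec} K\to X$ uniquely to a morphism $\operatorname{Spec}\mathcal O_{B,b}\to X$ over $B$. Spreading out, these local extensions glue to a morphism $\sigma_P$ defined on a neighbourhood of each point, hence on all of $B$. One could equally say that a rational map from a smooth curve to a projective variety extends to a morphism. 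The composite $\pi\circ\sigma_P$ agrees with $\mathrm{id}_B$ on a dense open subset. Since $B$ is reduced and $B$ is separated, it follows that $\pi\circ\sigma_P=\mathrm{id}_B$, so $\sigma_P$ is a section.

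\emph{Bijectivity.} By construction, $\sigma_P$ restricts to $P$ at the generic point, so $P_{\sigma_P}=P$. In the other direction, the sections $\sigma$ and $\sigma_{P_\sigma}$ are two morphisms from the reduced scheme $B$ to the separated scheme $X$ that agree at $\eta$. They therefore agree on a dense open subset, and hence everywhere. So the two constructions are inverse to each other.

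The only step with real content is the extension across the finitely many points where the fibre is singular. Away from these points $X$ is an abelian scheme and the extension is immediate. At a singular fibre one cannot use a group-scheme structure on the whole fibre, so the argument relies on properness of $\pi$ through the valuative criterion. This is exactly why we work with the proper model $X$ rather than, say, the N\'eron model's identity component.
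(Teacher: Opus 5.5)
Your proof is correct and follows the standard argument for this statement. The paper does not prove it itself but quotes \cite[Proposition 5.4]{SS19}, where a section is restricted to the generic fibre, and conversely the rational section defined by a $K$-point is extended over the smooth curve $B$ using properness of $\pi$ (a rational map from a smooth curve to a projective variety extends to a morphism), exactly as you do.
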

 We next introduce explicit Weierstrass models for elliptic fibrations, which form the core of our construction.
 \begin{definition}[Weierstrass model]
 \label{defn-model}
  Let $L$ be a line bundle on a smooth projective curve $B$. 
Take
\[
a_4\in H^0(B,L^{\otimes 4}),\quad a_6\in H^0(B,L^{\otimes 6}).
\]
We define the Weierstrass equation
\[
y^2=x^3+a_4x+a_6,
\]
whose discriminant
\[
\Delta:=-16\,(4a_4^3+27a_6^2)\in H^0(B,L^{\otimes 12})
\]
is nonzero.
This defines a Weierstrass model of an elliptic fibration $\pi: X\to B$ together with its zero section.
\end{definition}
\begin{remark}
\label{rmk-fibers}
\begin{enumerate}
\item The fiber of $\pi$ over a point $b\in B$ is singular if and only if $\Delta(b)=0$.
 If $\Delta$ vanishes to order $1$ at $b$, then the fiber over $b$ is of Kodaira type $I_1$, and $X$ is smooth along this fiber
(cf. \cite[Section 5.9]{SS19}).
\item  If the Weierstrass data $(L,a_4,a_6)$ is minimal, that is, there exists no point at which $a_4$ vanishes to order at least $4$ and $a_6$ vanishes to order at least $6$, then $\pi:X\to B$ is a relatively minimal Jacobian elliptic fibration with generic fiber $E$, and $L:=(R^1 f_*\mathcal O_X)^\vee$ (cf. \cite[Section 5.10]{SS19}).
\item  Since $f$ has a section, it carries no multiple fibers, and the canonical bundle formula (cf. \cite[Therem 5.44]{SS19})  implies
\[
\omega_X \cong \pi^*(\omega_B \otimes L).
\]
\end{enumerate}      
\end{remark}
We next state two key numerical identities for elliptic surfaces, which are indispensable for our main argument.
 \begin{proposition}
 \cite[Corollary 5.45]{SS19}
 \label{Section-prop}
Let \(\pi : X \to B\) be a Jacobian elliptic surface over a smooth projective curve, and let $P$ be any section of \(\pi\). 
Then \(P^2 = -\chi(\mathcal O_X)\).
 \end{proposition}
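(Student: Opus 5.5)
The plan is to combine the adjunction formula on the smooth curve $P$ with the canonical bundle formula of Remark~\ref{rmk-fibers}(3). Here $\pi:X\to B$ is taken to be the smooth, relatively minimal model. The one extra input is the identification $\deg L=\chi(\mathcal O_X)$ for $L=(R^1\pi_*\mathcal O_X)^\vee$.

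First, I will record the geometry of the section. Since $\pi\circ P=\mathrm{id}_B$, the image of $P$ is an irreducible curve mapped isomorphically onto $B$ by $\pi$, so $P\cong B$ is smooth of genus $g(B)$. It meets every fibre $F$ in exactly one point, with $P\cdot F=1$. That point lies on a multiplicity-one component of $F$ and is a smooth point of $F$. As $X$ is smooth, adjunction applies:
\[
2g(B)-2 = P^2 + K_X\cdot P .
\]

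Next, I will compute $K_X\cdot P$. By Remark~\ref{rmk-fibers}(3), $\omega_X\cong\pi^*(\omega_B\otimes L)$. Pulling this back along $P$ and using $\pi\circ P=\mathrm{id}_B$ gives $\omega_X|_P\cong \omega_B\otimes L$. Hence
\[
K_X\cdot P=\deg(\omega_B\otimes L)=2g(B)-2+\deg L .
\]
Substituting into adjunction yields $P^2=-\deg L$.

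The remaining step, which I expect to be the only real content, is to show $\deg L=\chi(\mathcal O_X)$. I will use the Leray spectral sequence for $\pi$. Because the fibres are connected, $\pi_*\mathcal O_X=\mathcal O_B$. Since there are no multiple fibres, $R^1\pi_*\mathcal O_X$ is locally free of rank one, and it equals $L^{-1}$ by Remark~\ref{rmk-fibers}(2). Then
\[
\chi(\mathcal O_X)=\chi(\mathcal O_B)-\chi(L^{-1}) =(1-g(B))-(-\deg L+1-g(B))=\deg L
\]
by Riemann--Roch on $B$, which gives $P^2=-\chi(\mathcal O_X)$. If one prefers not to verify local freeness of $R^1\pi_*\mathcal O_X$ directly, I would instead quote the standard fact $\deg L=\chi(\mathcal O_X)$ from \cite{SS19} alongside the canonical bundle formula.
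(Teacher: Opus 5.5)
Your proof is correct and is essentially the standard derivation behind the cited \cite[Corollary 5.45]{SS19}; the paper itself gives no proof beyond that citation. The ingredients match: adjunction on $P\cong B$, the canonical bundle formula $\omega_X\cong\pi^*(\omega_B\otimes L)$ giving $K_X\cdot P=2g(B)-2+\deg L$, and $\deg L=\chi(\mathcal O_X)$, which you verify correctly via the Leray spectral sequence and Riemann--Roch (you were also right to make relative minimality explicit, since both the canonical bundle formula and the statement fail for non-minimal fibrations).
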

 \begin{proposition}
 \cite[Corollary 5.50]{SS19}
 \label{Noether-prop}
 For an elliptic surface $X$, we have 
 $$
 \chi(\mathcal O_X)=\frac{1}{12}e(X)>0.
 $$
 \end{proposition}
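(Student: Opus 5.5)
The plan is to combine Noether's formula with the canonical bundle formula to get the equality, and then compute $e(X)$ fibrewise to get positivity. Throughout, $X$ is taken relatively minimal, which is the setting of the cited results. Noether's formula gives
\[
12\,\chi(\mathcal O_X)=K_X^2+e(X),
\]
so the equality $\chi(\mathcal O_X)=\tfrac{1}{12}e(X)$ reduces to showing $K_X^2=0$.

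For $K_X^2=0$ I will use the canonical bundle formula for a relatively minimal elliptic fibration $\pi\colon X\to B$:
\[
K_X\equiv \pi^*(K_B\otimes L)+\sum_i (m_i-1)F_i ,
\]
where the $F_i$ are the reduced supports of the multiple fibres $m_iF_i$. In the Jacobian case there are no multiple fibres, and this is Remark~\ref{rmk-fibers}(3). In general, $m_iF_i\equiv F$, so $K_X$ is numerically a rational multiple of a fibre class $F$. Since $F^2=0$, we get $K_X^2=0$, and hence $\chi(\mathcal O_X)=e(X)/12$.

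For positivity I will compute $e(X)$ topologically. Away from the finitely many points $b_1,\dots,b_r\in B$ over which the fibre is singular, $\pi$ is a locally trivial fibre bundle with fibre a torus, whose Euler number is $0$. Additivity of the topological Euler characteristic, over a decomposition of $B$ into small discs around the $b_j$ and their complement, then gives
\[
e(X)=e(B)\,e(F)+\sum_j\bigl(e(F_{b_j})-e(F)\bigr)=\sum_j e(F_{b_j}).
\]
I will then go through Kodaira's list of singular fibres. Types $I_n$ ($n\ge1$), $II$, $III$ and $IV$ have Euler numbers $n$, $2$, $3$ and $4$. Types $I_n^*$, $II^*$, $III^*$ and $IV^*$ have Euler numbers $n+6$, $10$, $9$ and $8$. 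The multiple types ${}_mI_n$ contribute $n$. So every fibre contributes a non-negative amount, and a fibre contributes $0$ only if its support is a smooth elliptic curve.

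The main point needing care is the positivity, and specifically the reading of the standing Assumption. "Singular fibre" must be understood as a fibre whose support is singular, so that it is not of type ${}_mI_0$. With that reading, one fibre contributes $e\ge1$ and all others contribute $e\ge0$, so $e(X)>0$ and therefore $\chi(\mathcal O_X)>0$. This reading excludes exactly the isotrivial smooth-fibration case, e.g.\ the product of \cite[Example 5.6]{SS19}, where $e(X)=0$. As a cross-check in the Jacobian setting, $\chi(\mathcal O_X)=\deg L$ and $\deg\Delta=12\deg L=e(X)$ by counting the vanishing orders of $\Delta$ fibre by fibre, so a nonzero vanishing of $\Delta$ forces $\deg L>0$.
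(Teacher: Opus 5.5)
Your argument is correct. The paper gives no proof of its own; it simply quotes \cite[Corollary 5.50]{SS19}, and what you wrote is the standard derivation behind that citation: Noether's formula, $K_X^2=0$ from the canonical bundle formula, and $e(X)=\sum_v e(F_v)$, in which every fibre with singular support contributes positively. Your explicit assumption of relative minimality is genuinely needed rather than a convenience, since blowing up a point raises $e(X)$ by one without changing $\chi(\mathcal O_X)$. The paper leaves this hypothesis implicit, and it is why Lemma~\ref{lem:smooth} establishes relative minimality before invoking the formula.
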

 As our central arithmetic technical ingredient, we recall the Nagell--Lutz theorem for bounding torsion section coordinates (cf. \cite{Lutz37, Nagell35}, see also \cite[Corollary VIII.7.2]{Silverman09},  \cite[Theorem 2.5]{ST15}) as follows.
\begin{theorem}
\label{LN-thm}
Let $E/\bQ$ be an elliptic curve with Weierstrass equation
$$
y^2=x^3+Ax+B,\quad A, B\in\bZ.
$$
Suppose that $P\in E(\bQ)$ is a nonzero torsion point.
Then the following statements hold.
\begin{itemize}
\item[(a)] $x(P), y(P)\in \bZ$.
\item[(b)] Either $[2]P=O$ or else $y(P)^2$ divides $4A^3+27B^2$.
\end{itemize}
\end{theorem}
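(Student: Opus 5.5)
The plan is to follow the classical argument (cf. \cite[Chapter VIII.7]{Silverman09}, \cite[Chapter II]{ST15}): first prove integrality (a) by a $p$-adic filtration argument, then deduce (b) from (a) applied to $P$ and to $[2]P$. Throughout, write $\varphi(x)=x^3+Ax+B$ and $D=4A^3+27B^2\neq 0$.

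\emph{Step 1: a filtration for each prime.} Fix a prime $p$. If $P=(x,y)\in E(\bQ)$ and $p$ divides the denominator of $x$ or $y$, then comparing $p$-adic valuations in $y^2=\varphi(x)$ forces $\operatorname{ord}_p(x)=-2\nu$ and $\operatorname{ord}_p(y)=-3\nu$ for some $\nu\ge 1$. For $\nu\ge1$ set
\[
E(p^\nu)=\{O\}\cup\{P\in E(\bQ): \operatorname{ord}_p(x(P))\le -2\nu\}.
\]
Integrality of all torsion points is then equivalent to the statement that $E(p)$ contains no nonzero torsion point, for every prime $p$.

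\emph{Step 2 (the main obstacle): $t=x/y$ is additive to high order.} Use the coordinates $t=x/y$ and $s=1/y$, in which $O$ becomes $(0,0)$ and the curve reads $s=t^3+Ats^2+Bs^3$. For $P\in E(p^\nu)$ one has $\operatorname{ord}_p t\ge\nu$ and $\operatorname{ord}_p s\ge 3\nu$. I would show that the chord through $P_1,P_2\in E(p^\nu)$, written as $s=\alpha t+\beta$, has slope $\alpha$ with $\operatorname{ord}_p\alpha\ge 2\nu$. This uses the identity $\alpha=(t_1^2+t_1t_2+t_2^2+\dots)/(1-\dots)$, obtained by subtracting the two curve equations and dividing by $t_1-t_2$, with the tangent case handled by the analogous derivative formula. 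Consequently $\operatorname{ord}_p\beta\ge 3\nu$. Summing the roots of the resulting cubic in $t$ gives
\[
t(P_1+P_2)\equiv t(P_1)+t(P_2)\pmod{p^{3\nu}}.
\]
Hence $E(p^\nu)$ is a subgroup and $P\mapsto t(P)$ induces an injective homomorphism
\[
E(p^\nu)/E(p^{3\nu})\to p^\nu\bZ/p^{3\nu}\bZ .
\]
This valuation bookkeeping is the technical heart of the proof; everything else is formal.

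\emph{Step 3: no torsion in $E(p)$.} Let $P\neq O$ be torsion of order $n$. Choose $\nu$ with $P\in E(p^\nu)\setminus E(p^{\nu+1})$, which is possible because $t(P)\neq0$, so the filtration stabilizes. If $p\nmid n$, then $0=t(nP)\equiv n\,t(P)\pmod{p^{\nu+1}}$ forces $\operatorname{ord}_p t(P)\ge\nu+1$, a contradiction. If $p\mid n$, replace $P$ by $Q=(n/p)P$, which has order $p$ and lies in some $E(p^\mu)\setminus E(p^{\mu+1})$. Then $0=t(pQ)\equiv p\,t(Q)\pmod{p^{3\mu}}$. But $\operatorname{ord}_p(p\,t(Q))=\mu+1<3\mu$, again a contradiction. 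This proves (a).

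\emph{Step 4: part (b).} Suppose $[2]P\neq O$. Then $[2]P$ is a nonzero torsion point, so by (a) it has integer coordinates, and $y=y(P)\neq0$. The duplication formula gives
\[
x([2]P)=\frac{\varphi'(x)^2}{4y^2}-2x .
\]
Since $x([2]P)$ and $2x$ are integers, $y^2\mid \varphi'(x)^2$, hence $y\mid\varphi'(x)$. Next, Euclidean elimination produces polynomials $f,g\in\bZ[X]$ with
\[
f(X)\varphi(X)+g(X)\varphi'(X)^2=D .
\]
Here one takes $g=-(3X^2+4A)$ up to sign and a suitable linear $f$, and checks the identity directly. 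Evaluating at $X=x$ and using $y^2=\varphi(x)$ and $y^2\mid\varphi'(x)^2$ shows $y^2\mid D=4A^3+27B^2$, which is (b).
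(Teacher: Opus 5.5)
Your overall strategy is correct, and it is the classical elementary proof from Silverman--Tate \cite{ST15}. The paper does not prove this statement; it quotes it from \cite{Nagell35, Lutz37}, \cite[Corollary VIII.7.2]{Silverman09} and \cite[Theorem 2.5]{ST15}. Your $(t,s)$-filtration argument for (a) is sound. Silverman's book instead derives (a) from the torsion estimates for the formal group of $E$ over the $p$-adic integers; your version does the same thing by hand for short Weierstrass models. Your reduction of (b) to (a) applied to $[2]P$ is also the right idea.

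There is, however, one false claim in Step~4. No identity $f\varphi+g\varphi'^2=D$ with $f$ linear exists. If $\deg f\le 1$, then $\deg(f\varphi)\le 4$, which forces $g$ to be constant. Comparing coefficients then gives $A=B=0$, contradicting $D\neq 0$. So "checking the identity directly" with a linear $f$ would fail. With $g=3X^2+4A$ the cofactor must be cubic:
\[
(3X^2+4A)\,\varphi'(X)^2-27\,(X^3+AX-B)\,\varphi(X)=4A^3+27B^2 .
\]
This is Silverman's identity $(3X^2+4A)\psi(X)-(3X^3-5AX-27B)\varphi(X)=4A^3+27B^2$ rewritten using $\psi=\varphi'^2-8X\varphi$, where $x([2]P)=\psi(x)/4\varphi(x)$. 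Once you use this corrected identity, your argument goes through exactly as written. Evaluating at $X=x$ and using $y^2=\varphi(x)$ together with $y^2\mid\varphi'(x)^2$ gives $y^2\mid 4A^3+27B^2$.
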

\subsection{Construction}
\label{cons:main}
Fix any integer $g > 1$. 
We first let
\[
D(t):=4(t^2+1)^3+27.
\]

Choose distinct complex numbers $\lambda_1,\dots,\lambda_{2g+1}$, none of which is $0$ or a root of $D$.
Let $B$ be the smooth projective model of the hyperelliptic curve defined by the following equation
\[
w^2=\prod_{i=1}^{2g+1}(t-\lambda_i).
\]
Note that the branch points are $\lambda_1,\cdots, \lambda_{2g+1}$ and $\infty$.
Then $g(B)=g$ by Hurwitz's formula.
Moreover, the hyperelliptic map $\varphi: B\to\bP^1$ is totally ramified at $\infty$, so $t$ has a pole divisor $2\infty$. Let
\[
L:= \mathcal O_B(\infty),\quad \deg L=1 .
\]
As the pole divisor of $t$ is $2\infty$, we have
\[
t\in H^0(B,L^{\otimes 2}),\quad t^2+1\in H^0(B,L^{\otimes 4}),\quad 1\in H^0(B,L^{\otimes 6}).
\]
Therefore, $a_4\coloneqq t^2+1$ and $a_6\coloneqq 1$ are admissible Weierstrass data for $L$ as in Definition \ref{defn-model}.
Let $\pi: X\to B$ be the associated Weierstrass elliptic surface
\[
y^2=x^3+(t^2+1)\,x+1 .
\]
Besides the zero section, $\pi: X\to B$ carries the section
\[
P\coloneqq(x,y)=(0,1),
\]
which is well defined since $1^2=0^3+(t^2+1)\cdot 0+1$.
\subsection{Proof of Theorem \ref{MainThm}}
\begin{lemma}
\label{lem:smooth}
Let $X$ be as in Construction~\ref{cons:main}.
Then there exists a relatively minimal elliptic fibration $\pi:  X\to B$ whose singular fibers are exactly twelve fibers of Kodaira type $I_1$. 
Moreover $\chi(\mathcal O_X)=1$.
\end{lemma}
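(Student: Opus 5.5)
The plan is to compute the discriminant of the Weierstrass model in Construction~\ref{cons:main} explicitly as a section of $L^{\otimes 12}$ and show that it has exactly twelve simple zeros on $B$. Then Remark~\ref{rmk-fibers} gives minimality, the fibre types and smoothness, and Proposition~\ref{Noether-prop} gives $\chi(\mathcal O_X)$. With $a_4=t^2+1$ and $a_6=1$, Definition~\ref{defn-model} gives
\[
\Delta=-16\bigl(4(t^2+1)^3+27\bigr)=-16\,D(t),
\]
viewed as a section of $L^{\otimes 12}=\mathcal O_B(12\infty)$.

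\textbf{Step 1: the roots of $D$.} Write $u=t^2+1$, so that $D=4u^3+27$. The equation $u^3=-27/4$ has three distinct nonzero roots $u_1,u_2,u_3$. For each $j$, $t^2=u_j-1$ has two distinct roots, since $u_j\neq 1$. Roots coming from different $u_j$ are distinct, because they have different values of $t^2$. Hence $D$, a polynomial of degree $6$ in $t$, has six distinct simple roots $\tau_1,\dots,\tau_6$, and $0$ is not among them since $D(0)=31$.

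\textbf{Step 2: pulling back to $B$.} By the choice of the $\lambda_i$, no $\tau_k$ is a branch point of the double cover $\varphi\colon B\to\bP^1$. Hence each $\tau_k$ has two distinct preimages in $B$, at which $t-\tau_k$ is a local parameter. This gives twelve distinct points of $B$, and $D\circ\varphi$ vanishes to order exactly $1$ at each of them.

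At $\infty$, $t$ has a pole of order $2$, so $D(t)$ has a pole of order exactly $12$ there, coming from its leading term $4t^6$. As a section of $\mathcal O_B(12\infty)$, $\Delta$ therefore has order $12-12=0$ at $\infty$. As a consistency check, $\deg L^{\otimes 12}=12$ equals the number of simple zeros found. I expect this step to be the main obstacle: one must handle the point at infinity and the ramification of $\varphi$ correctly, and it is exactly here that the hypotheses on the $\lambda_i$ and $\deg L=1$ are used.

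\textbf{Step 3: conclusions.} Since $\Delta$ vanishes to order at most $1$ everywhere, the pair $(a_4,a_6)$ cannot vanish to orders $(\ge4,\ge6)$ anywhere. So the data are minimal, and Remark~\ref{rmk-fibers}(2) gives a relatively minimal Jacobian elliptic fibration $\pi\colon X\to B$. By Remark~\ref{rmk-fibers}(1), the singular fibres lie exactly over the twelve zeros of $\Delta$, each is of type $I_1$, and $X$ is smooth along them. The Weierstrass surface is also smooth away from the singular fibres, so $X$ is smooth.

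The Euler number of an $I_1$ fibre is $1$ and smooth fibres contribute $0$, so $e(X)=12$. Proposition~\ref{Noether-prop} then yields $\chi(\mathcal O_X)=1$. This agrees with the general formula $\chi(\mathcal O_X)=\deg L=1$.
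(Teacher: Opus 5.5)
Your proposal is correct and follows essentially the same route as the paper. Both arguments write $\Delta=-16D(t)$, show it has exactly twelve simple zeros on $B$ (six simple roots of $D$, none a branch point) and no zero at $\infty$, then use Remark~\ref{rmk-fibers} for minimality and the $I_1$ fibres and the Noether formula for $\chi(\mathcal O_X)=1$. The differences are only cosmetic: you prove simplicity of the roots via $u=t^2+1$ rather than via $D'$, you compute the order of $\Delta$ at $\infty$ directly rather than by degree counting, and you deduce minimality from $\Delta$ vanishing to order at most $1$ rather than from $a_6$ vanishing only at $\infty$, where $a_4\neq 0$.
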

\begin{proof}
The discriminant of the Weierstrass equation is
\[
\Delta=-16\,(4(t^2+1)^3+27)=-16\,D(t)\in H^0(B,L^{\otimes 12}).
\]
We have $D'(t)=24\,t(t^2+1)^2$, while $D(0)=31$ and $D(\pm i)=27$; hence $D$ and $D'$ have no common root, so the six roots of $D$ are simple.
The branch points of $\varphi: B\to\bP^1$ are $\lambda_1,\dots,\lambda_{2g+1}$ and $\infty$. By the choice of the $\lambda_i$, no root of $D$ is a branch point, so each of the six simple roots of $D$ has two distinct, unramified preimages on $B$. Thus $\Delta$ has twelve simple zeros on the affine part $t\neq\infty$. As $\deg L^{\otimes 12}=12$, these account for all the zeros of $\Delta$; in particular $\Delta(\infty)\neq 0$. Therefore the singular fibers $F_i$ of $f$ are exactly the twelve fibers of type $I_1$ over these twelve points, $i=1,\cdots, 12$.
The data  $(L,a_4,a_6)$ is minimal: $a_6=1$ vanishes only at $\infty$, where $a_4=t^2+1$ does not vanish.
Thus, $a_4$ and $a_6$ never vanish to orders at least $4$ and $6$ respectively.
By Remark \ref{rmk-fibers}, $\pi: X\to B$ is a relatively minimal elliptic fibration.
For each singular fiber $F_i$ ($i=1,\cdots, 12)$, the Euler number $e(F_i)=1$.
Thus, $\chi(\mathcal O_X)=1$ by the Noether formula (cf. Propostion \ref{Noether-prop}).
\end{proof}
\begin{lemma}
\label{lem:nontorsion}
The section $P=(0,1)$ of Construction~\ref{cons:main} is non-torsion.
\end{lemma}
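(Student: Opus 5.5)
The plan is to reduce the claim to a statement about a single smooth fiber defined over $\bQ$, and then apply the Nagell--Lutz theorem (Theorem~\ref{LN-thm}) there. The key observation is that specialization is a group homomorphism on the smooth fibers. If $P$ were torsion, say $nP=O$ in $\mathrm{MW}(\pi)$ for some $n\geq 1$, then restricting the sections to any smooth fiber $X_b$ would give $nP_b=O_b$ in the group $X_b$. Here $P_b$ and $O_b$ are the points where $P$ and $O$ meet $X_b$. So it suffices to find one point $b\in B$ with $X_b$ smooth and $P_b$ of infinite order.

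I will take $b$ to be a point of $B$ lying over $t=0$. Such a point exists, and in fact there are two, because $0$ is not among the $\lambda_i$. The fiber over $b$ is smooth because $D(0)=31\neq 0$, as already noted in the proof of Lemma~\ref{lem:smooth}. This fiber is the elliptic curve
\[
E_0:\ y^2=x^3+x+1,
\]
which is defined over $\bQ$ with integer coefficients $A=B=1$. On $E_0$ the section specializes to $P_b=(0,1)\in E_0(\bQ)$. The $w$-coordinate of $b$ plays no role, since the fiber equation involves only $t$.

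Next I apply Theorem~\ref{LN-thm} to $2P_b$. The tangent slope at $P_b$ is
\[
\frac{3x^2+1}{2y}\Big|_{(0,1)}=\frac12,
\]
so $x(2P_b)=\tfrac14-0=\tfrac14$. Since $y(P_b)=1\neq 0$, we have $2P_b\neq O$. If $P_b$ were torsion, then $2P_b$ would be a nonzero torsion point in $E_0(\bQ)$, and part (a) of Theorem~\ref{LN-thm} would force $x(2P_b)\in\bZ$. This is a contradiction, so $P_b$ has infinite order, and hence $P$ is non-torsion.

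As a backup, in case one prefers an example using part (b), the fiber over $t=1$ also works. That fiber is $y^2=x^3+2x+1$, it is smooth since $D(1)=59\neq 0$, and it contains $P_b=(0,1)$ with $2P_b=(1,-2)$. Since $y(2P_b)^2=4\nmid 59$ and $y(2P_b)\neq 0$, part (b) shows that $2P_b$ is not torsion.

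The only step needing care is the specialization argument: the equality $nP=O$ of sections must restrict to the group law on the smooth fiber $X_b$. This holds because, on the smooth locus of $\pi$, the group structure of the generic fiber extends to the fibers. The arithmetic check itself is routine.
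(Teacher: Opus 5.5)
Your proposal is correct and follows essentially the same route as the paper: specialize to the smooth fiber over a point with $t=0$, compute $2P_b=(\tfrac14,-\tfrac98)$, and use part (a) of Theorem~\ref{LN-thm} to conclude that $P_b$, and hence $P$, has infinite order. Your explicit justification of the last step, via the group homomorphism given by restricting sections to a smooth fiber, is more precise than the paper's appeal to Proposition~\ref{K-rational}.
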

\begin{proof}
Choose $b\in B$ with $t(b)=0$. Since $0$ is neither a branch point nor a root of $D$, the fiber $X_b$ is the smooth elliptic curve
\[
E_0\colon\quad y^2=x^3+x+1
\]
over $\bQ$, and $P$ specializes to the point $P_b=(0,1)\in E_0(\bQ)$. By the duplication formula on $E_0$ (cf. \cite[III.2.3(d)]{Silverman09}), we have
\[
2P_b=\left(\tfrac14,\,-\tfrac98\right).
\]
The model $E_0\colon y^2=x^3+x+1$ has integer coefficients.
So by Theorem~\ref{LN-thm}, every torsion point of $E_0(\bQ)$ has integer coordinates.
Since $2P_b=(\tfrac14,\,-\tfrac98)$ does not have integer coordinates, it is not a torsion point, and hence neither is $P_b$.
As a result, $P$ is a non-torsion section by  Proposition \ref{K-rational}.
\end{proof}

\begin{proof}[Proof of Theorem~\ref{MainThm}]
Let $\pi: X\to B$, $L$, and $P$ be as in Construction~\ref{cons:main}. 
By Lemmas~\ref{lem:smooth} and \ref{lem:nontorsion}, $X$ is an elliptic surface over a curve $B$ of genus $g$ with $\chi(\mathcal O_X)=1$ and a non-torsion section $P$.
Note that $P^2=-1$ by Proposition \ref{Section-prop}.
Then $P$ is a (-1)-curve of genus $g$ since it is a section of $\pi$.
Then $X$ has infinitely many $(-1)$-curve of genus $g$ since $P$ is non-torsion section.
Therefore, we end the proof of Theorem \ref{MainThm}.
\end{proof}
\subsection*{Acknowledgements}
Sichen Li thanks Yi Gu and Sheng-Li Tan for helpful conversations, and Rong Du and Guolei Zhong for constant encouragement.
Jihao Liu was partially supported by the National Key R\&D Program of China \#\allowbreak 2024YFA1014400.
The authors used an AI-assisted tool for heuristic exploration and partial draft sketching during proof development; all mathematical reasoning, verification, and final arguments are entirely the authors' own work.

\end{document}